\documentclass[reqno]{amsart}
\usepackage{amssymb}  
\usepackage[T2A]{fontenc}
\usepackage[utf8]{inputenc}
\usepackage[russian,english]{babel}  
\usepackage{enumitem}\usepackage{cite}
\usepackage[dvipsnames]{xcolor}   
\usepackage{hyperref} 
\hypersetup{
    colorlinks=true,
    linkcolor=OliveGreen,  
    urlcolor=BlueViolet,
    citecolor=MidnightBlue
    }
\usepackage{tabularray} 

\newtheorem{thm}{Theorem}
\newtheorem{lem}{Lemma}
\newtheorem{prop}{Proposition}
\newtheorem{cor}{Corollary}

\begin{document}

\title[Refined conjugate generation]{Refined conjugate generation \\ in sporadic groups}
\author{Danila O. Revin}%
\address{Danila O. Revin
\newline\indent Sobolev Institute of Mathematics,
\newline\indent 4, Koptyug av.
\newline\indent 630090, Novosibirsk, Russia
\newline\indent {\sc and}
\newline\indent Krasovskii Institute of Mathematics and Mechanics,
\newline\indent 16, S. Kovalevskaja str.,
\newline\indent  620108, Yekaterinburg,  Russia
} 
\email{revin@math.nsc.ru
\newline\indent ORCID: \href{https://orcid.org/0000-0002-8601-0706}{0000-0002-8601-0706}}

\author{Andrei V. Zavarnitsine}%
\address{Andrei V. Zavarnitsine
\newline\indent Sobolev Institute of Mathematics,
\newline\indent 4, Koptyug av.
\newline\indent 630090, Novosibirsk, Russia
} 
\email{zav@math.nsc.ru
\newline\indent ORCID: \href{https://orcid.org/0000-0003-1983-3304}{0000-0003-1983-3304}
}
\thanks{This research was carried out within the State Contract of the Sobolev Institute of Mathematics (FWNF-2026-0017).}
\maketitle
\begin{quote}
\noindent{\sc Abstract. } 
Given an automorphism $x$ of order bigger than $2$ of a sporadic simple group $S$, we show that there are at most $3$ conjugates of $x$ required to generate a subgroup of order divisible by a fixed prime divisor $r$ of $|S|$. The only exception is the case where $S=Suz$, $x$ is in class $3A$, $r=11$, and then the required number of generators is~$4$.
\medskip

\noindent{\sc Keywords:} sporadic group, conjugacy, generators.

\medskip
\noindent{\small {\sc MSC2020:} 
20D06, 
20E28, 
20E45  
\hfill  {\sc UDC:}  512.542 }
\end{quote}

\section{Introduction}

Following~\cite{GS}, given a nonabelian finite simple group $S$ and its nonidentity automorphism $x$, we define $\alpha(x)=\alpha^{\phantom{S}}_{S}(x)$ to be the minimum number of conjugates of $x$ in $\langle S, x\rangle$ that generate a subgroup containing~$S$. This parameter has been broadly studied and found numerous applications, e.\,g., in \cite{DMPZ,fmow, RZ1, FGG, YRV, YWRV}. 

In this paper, we are interested in the following refinement introduced in \cite{YRV}.
Let $r$ be a prime divisor of $|S|$. We set $\beta_r(x) = \beta_{r,S}(x)$ to be the minimum number of conjugates of $x$ in $\langle S,x\rangle$ that generate a subgroup of order divisible by~$r$. The parameter $\beta_r(x)$, like $\alpha(x)$, can be used in obtaining analogues of the Baer-Suzuki theorem, see \cite{YRV,YWR,RZ1,WGR, YWRV}.  In some cases (e.\,g., in \cite[proof of Theorem~1]{RZ1} or \cite[proof of Proposition~2]{WGR}), knowing $\beta_r(x)$ for an appropriate $r$ makes it possible  to significantly improve the estimates on $\alpha(x)$, or even determine $\alpha(x)$ precisely.

Trivially, we have $\beta_r(x) \leqslant  \alpha(x)$ for all $r$ and $x$. Also,  $\beta_r(x)=1$ if and only if $r$ divides $|x|$, so we will assume throughout that $r$ is coprime to $|x|$. In particular, $\beta_r(x)=2$ whenever $\alpha(x)=2$.  It is known  \cite[Theorem~1]{R11} that $\beta_2(x)\leqslant 2$ for every simple group $S$ and $1\ne x\in\operatorname{Aut}S$, and so $\beta_2(x)= 2$ for $x$ of odd order. 
 
In the present paper, we restrict our attention to the case where $S$~is a sporadic group. 
It is known \cite[Theorem~1]{YWR} that in this case $\beta_3(x)\leqslant 3$ and $\beta_r(x)\leqslant r-1$ for $r>3$. Our main result, which significantly refines these estimates in the case $|x|>2$, is as follows.

\begin{thm}\label{main} Let $S$ be a sporadic group, let $x\in \operatorname{Aut} S$ with $|x| > 2$, and let $r$ be a prime divisor of $|S|$ coprime to $|x|$. Then $2\leqslant \beta_{r,S}(x)\leqslant 3$, except when $(S,x^S,r)=(Suz,3A,11)$ and then $\beta_{r,S}(x)=4$.
\end{thm}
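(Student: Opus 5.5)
The lower bound $\beta_r(x)\geqslant 2$ is immediate: $r$ is coprime to $|x|$, and $\beta_r(x)=1$ exactly when $r$ divides $|x|$. For the upper bound I will work case by case over the $26$ sporadic groups $S$, over the classes $x^S$ of elements of order greater than $2$ in $\operatorname{Aut}S$ (including outer classes when $|\operatorname{Out}S|=2$), and over the primes $r$ dividing $|S|$ but not $|x|$. Several reductions shrink this list.

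\begin{enumerate}[label=(\roman*)]
\item If $\alpha(x)\leqslant 3$, then $\beta_r(x)\leqslant\alpha(x)\leqslant 3$ for every $r$. The known values and bounds of $\alpha$ for sporadic groups \cite{DMPZ,fmow} settle most classes at once.
\item If $y$ is a power of $x$ with $|y|>2$ and $\beta_r(y)\leqslant 3$, then $\beta_r(x)\leqslant 3$. Indeed, if $r$ divides $|\langle y^{g_1},y^{g_2},y^{g_3}\rangle|$, it divides the order of the larger group $\langle x^{g_1},x^{g_2},x^{g_3}\rangle$. So it suffices to treat $x$ of prime order $p\geqslant 3$, together with elements of order $4$ whose square is an involution; for the latter I instead use $\beta_2(x^2)\leqslant 2$, or argue directly.
\item $\beta_2(x)=2$ by \cite[Theorem~1]{R11}, so $r$ may be taken odd. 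Also $\beta_3(x)\leqslant 3$ by \cite[Theorem~1]{YWR}, so we may take $r\geqslant 5$.
\end{enumerate}

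After these reductions, a small list of triples $(S,x^S,r)$ remains, with $x$ of small prime order (typically $3$ or $5$) and $\alpha(x)\geqslant 4$. For each such triple I need a pair or triple of conjugates generating a subgroup of order divisible by $r$. The usual tool is structure-constant computation from character tables in the GAP library. If for some class $C$ of elements of order $r$ (or of order divisible by $r$) the class algebra constant $\sum_{\chi}\chi(x)^2\overline{\chi(c)}/\chi(1)$ is nonzero, then $c=x_1x_2$ for some conjugates $x_1,x_2$ of $x$, so $\beta_r(x)=2$. For three generators the same works with $\sum_{\chi}\chi(x)^3\overline{\chi(c)}/\chi(1)^2$, giving $c=x_1x_2x_3$. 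These sums count all tuples, not only generating ones, but that does not matter here: we only need $r$ to divide the order of the subgroup generated. Where this fails, or for groups like the Monster or Baby Monster with awkward fusion, I would instead locate a subgroup $H$ (from the known maximal subgroups) with $r$ dividing $|H|$ and $x$ fusing into a class of $H$ for which the claim is known, for example $H=A_n$ or $H=L_2(q)$, where explicit generation by few conjugates is classical.

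The main obstacle is the exceptional case $(Suz,3A,11)$, where I must show that three conjugates do not suffice and four do. For $\beta\leqslant 4$, I would take a maximal subgroup such as $M_{12}{:}2$ or $U_5(2)$ containing elements of order $11$. I would check that $3A$ restricts to a class whose four conjugates generate a subgroup of order divisible by $11$; a direct computation in GAP is also possible. For $\beta>3$, I would argue as follows. Any $\langle x_1,x_2,x_3\rangle$ with order divisible by $11$ lies in a maximal subgroup whose order is divisible by $11$. These are $G_2(4)$? (no: $11\nmid|G_2(4)|$), $U_5(2)$, $M_{12}{:}2$, $3\cdot U_4(3){:}2$? — I would list them from the Atlas. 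In each such $M$, I would check, via the fusion of $3A$ and class multiplication in $M$ or its subgroups, that three elements of $3A\cap M$ generate only subgroups of order prime to $11$. The delicate point is handling every overgroup correctly, and the fact that $3A$ in $Suz$ has a large centralizer $3\cdot U_4(3)$, so that $3A$-elements generate only very restricted subgroups. The tool there is the $3A$-transposition-like geometry: $\langle x,y\rangle$ for $x,y\in 3A$ is small. I expect to resolve this by a computation rather than by a conceptual argument.
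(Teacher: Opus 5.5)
Your overall plan matches the paper's. You use $\beta_r(x)\leqslant\alpha(x)$ for every pair except $(Suz,3A)$, which by the known values of $\alpha$ is the only one with $\alpha>3$. Reductions (ii)--(iii) and the worry about the Monster are therefore unnecessary, and the remark on $\beta_2(x^2)$ for $|x|=4$ is moot, since $r$ is then odd. Positive structure constants such as $\operatorname{n}(3A,3A,3A,7A)$ then give $\beta_r(3A)\leqslant 3$ for $r=5,7,13$.

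The genuine gap is the upper bound $\beta_{11,Suz}(3A)\leqslant 4$. Your plan of finding four conjugates that generate a subgroup of order divisible by $11$ inside $M_{12}{:}2$ or $U_5(2)$ cannot succeed. The maximal subgroups of $Suz$ of order divisible by $11$ are exactly $U_5(2)$, $3^5{:}M_{11}$ and $M_{12}{:}2$. Your tentative list wrongly includes $3\cdot U_4(3){:}2$, whose order is prime to $11$, and omits $3^5{:}M_{11}$. Each of the three fails:
\begin{itemize}
\item $M_{12}{:}2$ contains no $3A$-elements.
\item The $3A$-elements of $3^5{:}M_{11}$ lie in $O_3$.
\item $3A\cap U_5(2)$ is the pair of classes with centraliser $3\times U_4(2)$, i.e.\ the images in $U_5(2)\cong PGU_5(2)$ of order-$3$ unitary reflections of $GU_5(2)$. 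Any four such reflections fix a common nonzero vector of the natural module. So the subgroup they generate lies in a point stabiliser of $U_5(2)$, of order $2^{10}\cdot 3^4$ or $2^6\cdot 3^5\cdot 5$, prime to $11$.
\end{itemize}
Hence four $3A$-elements generate a subgroup of order divisible by $11$ only when they generate all of $Suz$. The bound you need is exactly $\alpha_{Suz}(3A)\leqslant 4$, which is \cite[Theorem~3.1]{DMPZ} and is how the paper obtains it. Your fallback GAP search would have to find precisely such a generating quadruple.

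The same fact $\alpha_{Suz}(3A)=4$ is what your lower-bound argument silently uses to place $\langle x_1,x_2,x_3\rangle$ inside a maximal subgroup, so cite it explicitly. Beyond that, your outline for $\beta_{11}(3A)>3$ follows the paper's case analysis, with one caveat. Class multiplication coefficients in $M$ cannot by themselves show that no triple generates a subgroup of order divisible by $11$. The paper instead uses an explicit orbit computation in $U_5(2).2$, showing every such subgroup is a $\{2,3\}$-group. The reflection picture gives this by hand: three reflections fix a $2$-dimensional subspace, which always contains a nonzero isotropic vector. So they lie in an isotropic point stabiliser, of order $2^{10}\cdot 3^4$.
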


We also find the precise value of $\beta_{r,S}(x)$ for many triples $(S,x^S,r)$ from Theorem~\ref{main} and list them in Table~\ref{tab:primes_bigger} below. 
The triples with yet unknown value of $\beta_r$ collected in Table~\ref{tab:beta_unknown} as well as the case $|x|=2$
are subject to further research.

The following fact is also a consequence of our results.

\begin{cor}\label{r235} Let $S$, $x$, and $r$ be as stated in Theorem~\rm{\ref{main}}. If $r=2,3$ or $5$ then $\beta_{r,S}(x)=2$.
\end{cor}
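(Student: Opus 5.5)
The plan is to treat the three primes separately, using known facts for $r=2$ and, for $r=3,5$, a character-theoretic computation in the spirit of the rest of the paper. The lower bound $\beta_{r,S}(x)\geqslant 2$ is automatic: $r$ is coprime to $|x|$, so $\beta_r(x)\neq 1$, and by the remark in the introduction $\beta_r(x)=1$ only when $r$ divides $|x|$. It therefore suffices to show that two conjugates of $x$ always generate a subgroup of order divisible by $r$.

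For $r=2$ I will simply invoke \cite[Theorem~1]{R11}, which gives $\beta_2(x)\leqslant 2$ for every simple $S$ and every nonidentity $x\in\operatorname{Aut}S$. For $r=3$ and $r=5$ I will look, in the almost simple group $G=\langle S,x\rangle$, for a conjugate $y=x^g$ such that $xy^{-1}$, or $xy$ with $y$ a conjugate of $x^{-1}$, has order divisible by $r$. Since $\langle x,x^g\rangle\ni x(x^g)^{-1}$, it is enough to find a class $C$ of $G$ consisting of elements of order divisible by $r$ together with a nonzero class structure constant
$$
\frac{|x^G|^2}{|G|}\sum_{\chi\in\operatorname{Irr}(G)}\frac{\chi(x)\overline{\chi(x)}\,\overline{\chi(z)}}{\chi(1)}\neq 0,\qquad z\in C,
$$
which counts solutions of $x\cdot(x^{-1})^g=z$. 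Because $|x|>2$ and $r\nmid|x|$, such a $z$ lies in $S$ whenever $x\in S$. When $x$ is an outer automorphism it lies in $xS$-type cosets, and I will instead use $x\,(x^{-1})^{g}\in S$.

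In practice, for each of the 26 sporadic groups and each of the at most two possible groups $\langle S,x\rangle$, I will run through all classes $x^G$ of elements of order at least $3$ and coprime to $r$. For each, I will check with the GAP character table library that some class of elements of order divisible by $r$ has a positive structure constant with $(x^G,(x^{-1})^G)$. For $r=3$ the natural targets are $3A$ and $6$-classes; for $r=5$ they are $5$-, $10$- and $15$-classes. A useful shortcut will replace $x$ by a suitable power $x^k$ of prime order $p\geqslant 3$, $p\ne r$, since $\langle x^{g_1},x^{g_2}\rangle\supseteq\langle (x^k)^{g_1},(x^k)^{g_2}\rangle$. This reduces the work to prime-order classes, plus order-$4$ classes, which arise when $|x|$ is a power of $2$ that is at least $4$. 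It also covers elements such as $x$ of order $2p$.

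The main obstacle will be the classes where a pure structure-constant count is inconclusive, most likely the $4$-classes for $r=3$ or $5$ in the larger groups, and any outer automorphism classes of order $4$. There I would try two things in turn. First, sum structure constants over all $r$-singular target classes and subtract contributions from maximal subgroups of order prime to $r$, using known tables of marks or lists of maximal subgroups. Failing that, produce explicit generators $x,x^g$ in a permutation or matrix representation and exhibit a product of order divisible by $r$. The remaining cases should then fall to the data behind Table~\ref{tab:primes_bigger}, which gives $\beta_r=2$ in every entry with $r\leqslant 5$.
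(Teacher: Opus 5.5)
Your mechanism is the right one: the paper also certifies $\beta_r(x)=2$ by a positive class multiplication coefficient $\operatorname{m}(x^S,x^S,y^S)$ with $y$ of order divisible by $r$. As a proof, however, your plan does not close. You propose to verify this criterion for every class of order at least $3$ in every sporadic almost simple group, but you give no argument that it succeeds everywhere, and you expect it to fail somewhere.

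Your fallbacks do not handle those failures. If every constant with an $r$-singular target vanishes, then no product of two conjugates is $r$-singular, and subtracting maximal-subgroup contributions cannot change that. Suppose instead you mean counting all pairs and discarding those that generate $r'$-subgroups. Then you would need the maximal $r'$-subgroups of $G$, not the maximal subgroups of $r'$-order, since an $r'$-subgroup may lie in a maximal subgroup of order divisible by $r$. Such data, and tables of marks, are not available for the large sporadic groups, where explicit search is also out of reach. Finally, Table~\ref{tab:primes_bigger} lists only the pairs $(S,x^S)$ with $\alpha(x)>2$, so it cannot absorb a failure at any other class. You give no reason why failures would be confined to those pairs.

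The missing idea is the trivial inequality $\beta_r(x)\leqslant\alpha(x)$ combined with Proposition~\ref{alpha}. For every $x$ with $|x|>2$ outside the thirteen listed pairs, $\alpha(x)=2$, hence $2\leqslant\beta_r(x)\leqslant\alpha(x)=2$ with no computation at all. For the thirteen exceptional pairs, the primes $r\in\{2,3,5\}$ coprime to $|x|$ are:
\begin{itemize}
\item $r=2,5$ for the classes of order $3$;
\item $r=3,5$ for $4A$ of $HS$ and the outer class $4D$ of $HN$.
\end{itemize}
Each of these appears in column~3 of Table~\ref{tab:primes_bigger}, so the structure-constant check succeeds there. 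That is the paper's whole proof. Your criterion only has to be verified in these finitely many cases. Your reduction to powers of $x$ of prime order or order $4$, and the separate appeal to \cite{R11} for $r=2$, become unnecessary.
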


To prove these results we used the computer algebra system \texttt{GAP}~\cite{GAP}.
The accompanying code is available for verification from \cite{RZG}.

\begin{longtblr}[
     caption={Known values of $\beta_{r,S}(x)$ for $x\in \operatorname{Aut} S$ with $S$ sporadic simple, $|x|>2$,  and $r$ a prime divisor of $|S|$.},
     label={tab:primes_bigger},
      note{$\dagger$} = {Outer class.}
  ]{colspec = {l|l|l|l|l}, colsep = 3pt, rowsep = 0pt, 
    row{2}={c,abovesep=2pt,belowsep=1pt},
    row{3}={abovesep=3pt},
    column{2}={leftsep=4pt,rightsep=6pt},
    column{3-4}={leftsep=4pt},
    rowhead = 2}
  \hline[1pt]
  \SetCell[r=2]{m}  $S$      & \SetCell[r=2]{m,c}  $x^S$  & \SetCell[c=3]{c} $r$ \\ 
  \cline{3-5}
             &       & $\beta_{r}(x)=2$  & $\beta_{r}(x)=3$ & $\beta_r(x)=4$\\ 
\hline
 $J_2$        & $3A$             &   2, 5         & 7     & \\
 $McL$        & $3A$             &   2, 5         & 7, 11 & \\
 $Ly$         & $3A$             &   2, 5         &   & \\
 $Co_1$       & $3A$             &   2, 5         &   & \\
 $Fi_{22}$    & $3A$             &   2, 5         &   & \\
              & $3B$             &   2, 5, 7, 13  &  11 &  \\
 $Fi_{23}$    & $3A$             &   2, 5         &   & \\
              & $3B$             &   2, 5, 7, 13  &   & \\
 ${Fi_{24}}'$ & $3A$             &   2, 5         &   & \\
              & $3B$             &   2, 5, 7, 13  &   & \\
 $Suz$        & $3A$             &   2, 5         & 7, 13 & 11 \\
 $HS$         & $4A$             &   3, 5, 7      &  11 & \\
 $HN$         & $4D^\dagger$     &   3, 5, 7      &     & \\
\hline[1pt]
\end{longtblr}

\begin{table}[htb]
\caption{The triples $(S,x,r)$ with unknown $\beta_{r,S}(x)\in \{2,3\}$ for $S$, $x$, $r$ as in Table~\ref{tab:primes_bigger}.
\label{tab:beta_unknown}}
\begin{tabular}{l|l|l}
  \hline
   $S$        & $x^S$ &  $r\vphantom{S^{S^{S^S}}}$ \\ 
\hline
 $Ly$         & $3A$ &  7, 11, 31, 37, 67  $\vphantom{S^{S^S}}$ \\
 $Co_1$       & $3A$ &  7, 11, 13, 23  \\
 $Fi_{22}$    & $3A$ &  7, 11, 13  \\
 $Fi_{23}$    & $3A$ &  7, 11, 13, 17, 23  \\
              & $3B$ &  11, 17, 23  \\
 ${Fi_{24}}'$ & $3A$ &  7, 11, 13, 17, 23, 29  \\
              & $3B$ &  11, 17, 23, 29  \\
 $HN$         & $4D$ &  11, 19  \\
\hline
\end{tabular}
\end{table}

\section{Preliminaries}

As in \cite{DMPZ}, slightly abusing the notation we will often
write $\alpha(nX)$ instead of $\alpha(x)$ if $x\in nX$, and similarly for $\beta_r$, where $nX$ denotes a conjugacy class of $G$ consisting of elements of order $n$ and $X$ is an appropriate letter chosen according to the labelling in \cite{atlas} or \cite{GAP}.

Given conjugacy classes $C_1$, \ldots, $C_n$ of a group $G$ and an element $x_n\in C_n$, denote by $\operatorname{m} (C_1,\ldots,C_n)$ the number of $(n-1)$-tuples
$(x_1,\ldots,x_{n-1})$ of elements of $G$ with $x_1\ldots x_{n-1}=x_n$, where $x_i\in C_i$ for all $i$.
From character theory, it is known that this number can be found via the ordinary character table of $G$ using the formula\footnote{This formula is used in \cite[p. 885]{DMPZ} with neither a proof nor a reference. A proof 
can be found in \cite[Section 5.3]{mm} where (\ref{mcc}) is stated in a slightly different form.}
\begin{equation}\label{mcc}
\mathrm{m}(C_1,\ldots,C_n)=\frac{|C_1|\ldots|C_{n-1}|}{|G|}\sum\limits_{\chi\in\mathrm{Irr}(G)}\frac{\chi(x_1)\ldots\chi(x_{n-1})\overline{\chi(x_n)}}{\chi(1)^{n-2}}.
\end{equation}

A similar but slightly different parameter $\operatorname{n}(C_1,\ldots,C_n)$ is the number of $n$-tuples $(x_1,\ldots,x_n)$ with $x_1\ldots x_n=1$ and $x_i\in C_i$ for all $i$. It is given by 
$$
\mathrm{n}(C_1,\ldots,C_n)=\frac{|C_1|\ldots
|C_n|}{|G|}\sum\limits_{\chi\in\mathrm{Irr}(G)}\frac{\chi(x_1)\ldots\chi(x_n)}{\chi(1)^{n-2}}.
$$
These values are related by the formula
$$
|C_n|\cdot\mathrm{m}(C_1,\ldots,C_n)= \mathrm{n}(C_1,\ldots,\overline{C_n}),
$$
where $\overline{C_n}$ denotes the class containing the inverses of the elements in $C_n$. For our purposes, both $\mathrm{m}(C_1,\ldots,C_n)$ and $\mathrm{n}(C_1,\ldots,C_n)$ can be determined either from the Atlas \cite{atlas} or using \texttt{GAP} which has relevant built-in functions

\smallskip
\texttt{ClassMultiplicationCoefficient()} 

\smallskip\noindent 
to find $\mathrm{m}(\,)$ for $n=3$  and 

\smallskip
\texttt{ClassStructureCharTable()}

\smallskip\noindent 
to find $\mathrm{n}(\,)$ for arbitrary $n$.

The following result is a consequence of \cite[Lemma 2.3]{DMPZ} and a comment thereafter.

\begin{prop}\label{DiM_Zal} In a centreless group $G$, let $C_1,\ldots, C_n$ be conjugacy classes and let $x_i\in C_i$ be such that $x_1\ldots x_{n-1}=x_n$.
If $\mathrm{m}(C_1,\ldots,C_n)<|C_G(x_n)|$ then $\langle x_1,\ldots,x_{n-1}\rangle$ is a proper subgroup of $G$.
\end{prop}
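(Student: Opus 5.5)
The plan is to argue by contradiction with an orbit-counting argument. Let $T$ be the set of $(n-1)$-tuples $(y_1,\ldots,y_{n-1})$ with $y_i\in C_i$ for all $i$ and $y_1\cdots y_{n-1}=x_n$. By definition $|T|=\mathrm{m}(C_1,\ldots,C_n)$, and $(x_1,\ldots,x_{n-1})\in T$ by hypothesis.

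First, let the centralizer $C_G(x_n)$ act on $T$ by simultaneous conjugation, $(y_1,\ldots,y_{n-1})^g=(y_1^g,\ldots,y_{n-1}^g)$. This is an action on $T$:
\begin{itemize}
\item conjugation preserves each class $C_i$, so $y_i^g\in C_i$;
\item for $g\in C_G(x_n)$ we have $y_1^g\cdots y_{n-1}^g=(y_1\cdots y_{n-1})^g=x_n^g=x_n$.
\end{itemize}

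Next, compute the stabilizer of the tuple $(x_1,\ldots,x_{n-1})$. It consists of the elements $g\in C_G(x_n)$ that commute with every $x_i$. These are exactly the elements of $C_G(x_n)$ centralizing $H=\langle x_1,\ldots,x_{n-1}\rangle$. Since $x_n\in H$, centralizing $H$ already forces $g\in C_G(x_n)$, so the stabilizer equals $C_G(H)$.

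Now suppose, contrary to the claim, that $H=G$. Then the stabilizer is $C_G(G)=Z(G)=1$, because $G$ is centreless. The orbit of $(x_1,\ldots,x_{n-1})$ therefore has size $|C_G(x_n)|$. This orbit lies inside $T$, so $|C_G(x_n)|\leqslant|T|=\mathrm{m}(C_1,\ldots,C_n)$. This contradicts the hypothesis $\mathrm{m}(C_1,\ldots,C_n)<|C_G(x_n)|$, hence $H$ is a proper subgroup of $G$.

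There is no real obstacle here. The only point needing care is the observation that the stabilizer is $C_G(H)$ and not merely some subgroup of $C_G(x_n)$. This is what allows the centreless hypothesis to turn the orbit–stabilizer theorem into a lower bound on $\mathrm{m}(C_1,\ldots,C_n)$.
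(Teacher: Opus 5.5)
Your proof is correct. The paper gives no argument of its own for this statement; it quotes it as a consequence of \cite[Lemma~2.3]{DMPZ} and the remark following it.

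Your orbit-counting argument is, as far as I can tell, the standard reasoning behind such results. The centraliser $C_G(x_n)$ acts by simultaneous conjugation on the set $T$ of tuples counted by $\mathrm{m}(C_1,\ldots,C_n)$. The stabiliser of $(x_1,\ldots,x_{n-1})$ is $C_G(\langle x_1,\ldots,x_{n-1}\rangle)$, because $x_n$ lies in that subgroup. If the tuple generates $G$, this stabiliser is $Z(G)=1$, so its orbit has $|C_G(x_n)|$ elements inside $T$.

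The difference between the two routes is simply that you give a self-contained proof where the paper gives a citation, and the citation buys brevity.

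Your argument also yields slightly more than the statement. The tuples in $T$ that generate $G$ form a union of regular $C_G(x_n)$-orbits, so their number is a multiple of $|C_G(x_n)|$. Moreover, no character theory is needed; it enters only when $\mathrm{m}$ is actually evaluated via the class-multiplication formula.
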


Let a group $G$ act on a vector space $V$.
Following L.\,L. Scott \cite{Scott}, for a subgroup or an element $X$ of $G$, we denote by $\nu(X)=\nu(X,V)$ the codimension of the fixed-point subspace of $X$ on $V$, and also write $\nu(X^*)$ for $\nu(X,V^*)$, where $V^*$ is the dual of $V$.
\begin{prop}[{\cite[Theorem 1]{Scott}}] \label{scott_thm}
Suppose $G$ is generated by $x_1,\ldots,x_n$ with $x_1\ldots x_n=1$. Then 
$$
\sum_{i=1}^n \nu(x_i) \geqslant \nu(G) + \nu(G^*).
$$
\end{prop}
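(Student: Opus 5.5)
The statement just above is Proposition~\ref{scott_thm}, Scott's theorem, which the paper cites from \cite{Scott}. I would prove it by the standard cohomological dimension count for the group $\langle x_1,\ldots,x_n\rangle$ acting on $V$. Write $G$ for this group, $V^G$ for the fixed space, and $[V,x]=(x-1)V$, so that $\nu(x)=\dim [V,x]$. Consider the linear map
$$
\varphi\colon V^{n}\to V^{n},\qquad (v_1,\ldots,v_n)\mapsto (v_1-x_1v_1,\ \ldots,\ v_n-x_nv_n),
$$
and restrict attention to the subspace $W\subseteq V^n$ of tuples $(w_1,\ldots,w_n)$ with $w_i\in[V,x_i]$. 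Then $\dim W=\sum_i\nu(x_i)$, and the argument amounts to bounding $\dim W$ from below by two independent contributions.

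The key construction realises $1$-cocycles. Put $y_i=x_1\cdots x_i$, so that $y_0=y_n=1$, and define $\psi\colon V^n\to V$ by $\psi(w_1,\ldots,w_n)=\sum_i y_{i-1}w_i$. Given a $1$-cocycle $c\in Z^1(G,V)$, the relation $x_1\cdots x_n=1$ yields
$$
0=c(x_1\cdots x_n)=\sum_i y_{i-1}c(x_i).
$$
I also expect $c(x_i)\in[V,x_i]$ after adjusting by the coboundary determined by a suitable vector; this is the point to check carefully. The goal is the following exact count. The map $Z^1(G,V)\to W$, $c\mapsto (c(x_i))_i$, is injective because the $x_i$ generate $G$, and its image lies in $\ker(\psi|_W)$. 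So
$$
\dim Z^1(G,V)\le \dim W-\operatorname{rank}(\psi|_W).
$$

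Next I identify both pieces. For the first, $\dim Z^1\ge \dim B^1=\dim V-\dim V^G=\nu(G)$. For the second, the image of $\psi|_W$ is $\sum_i y_{i-1}[V,x_i]$. This space is contained in $[V,G]$, and I would show it equals $[V,G]$: it is $G$-invariant modulo itself, and one checks $[V,x_i]$ lies in it by induction on $i$ using the conjugates $y_{i-1}$. Since $[V,G]$ is the annihilator of $(V^*)^G$, its dimension is $\nu(G^*)$. Combining the two gives $\sum\nu(x_i)\ge\nu(G)+\nu(G^*)$.

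The main obstacle is getting the cocycle bookkeeping exactly right. One must make sure that the map from cocycles lands in $W$ with the correct count, so as not to lose $\dim V^G$. If that direct count fails, I would instead dualise: apply $\varphi$ to $V$ and to $V^*$ separately, and use $\dim\ker\varphi + \operatorname{rank}\varphi$ together with the identity $\operatorname{rank}(x-1)=\nu(x)$ on both $V$ and $V^*$.
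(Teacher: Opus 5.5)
The paper gives no proof of this statement; it simply cites Scott. Your setup is the standard one: the space $W=\bigoplus_i[V,x_i]$ of dimension $\sum_i\nu(x_i)$, the map $\psi(w)=\sum_i y_{i-1}w_i$ whose image is $[V,G]$ of dimension $\nu(G^*)$, and a $\nu(G)$-dimensional subspace of $\ker(\psi|_W)$. However, the step you flagged is false as stated, and the fix you propose cannot work.

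For $c\in Z^1(G,V)$ one need not have $c(x_i)\in[V,x_i]$. Adding a coboundary $c_v(g)=(1-g)v$ changes $c(x_i)$ by $(1-x_i)v\in[V,x_i]$. So the class of $c(x_i)$ in $V/[V,x_i]$ depends only on $[c]$: it encodes the restriction of $[c]$ to $H^1(\langle x_i\rangle,V)$, which can be nonzero when $\operatorname{char}F$ divides $|x_i|$. Concretely, let $G=\langle x\rangle$ have prime order $p=\operatorname{char}F$, put $x_1=x$, $x_2=x^{-1}$, and let $V=F$ be trivial. Then $W=0$ but $Z^1(G,V)=\operatorname{Hom}(G,F)\neq 0$. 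So $c\mapsto(c(x_i))_i$ does not land in $W$, and your inequality $\dim Z^1(G,V)\le\dim W-\operatorname{rank}(\psi|_W)$ fails. Your version does hold when $\operatorname{char}F\nmid|x_i|$ for all $i$, since then every $c|_{\langle x_i\rangle}$ is a coboundary; there it even gives the sharper bound with $\dim H^1(G,V)$ added to the right-hand side.

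The repair is to use only coboundaries, which is all you need. Define $\delta\colon V\to W$ by $v\mapsto((1-x_1)v,\ldots,(1-x_n)v)$.
\begin{itemize}
\item It lands in $W$ trivially.
\item $\ker\delta=\bigcap_iC_V(x_i)=V^G$ because the $x_i$ generate $G$, so $\operatorname{rank}\delta=\nu(G)$.
\item $\psi\circ\delta=0$ by telescoping: $\sum_i y_{i-1}(1-x_i)v=(y_0-y_n)v=0$.
\end{itemize}
Hence $\sum_i\nu(x_i)=\dim W\ge\operatorname{rank}\delta+\operatorname{rank}(\psi|_W)=\nu(G)+\nu(G^*)$.

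You should also make the induction for $\operatorname{im}(\psi|_W)=[V,G]$ explicit. Put $U_i=\sum_{j<i}[V,x_j]$. It is invariant under $x_1,\dots,x_{i-1}$, since $x_ju=u-(1-x_j)u$. Using $(1-gh)v=(1-g)v+g(1-h)v$, it follows that $U_i$ contains $(1-y_{i-1})w$ for every $w\in V$. For $w\in[V,x_i]$, write $w=y_{i-1}w+(1-y_{i-1})w$; this gives $[V,x_i]\subseteq\operatorname{im}\psi$. Conversely $\operatorname{im}\psi\subseteq[V,G]$ because $[V,G]$ is $G$-invariant. With this change the argument is complete, and the dualisation fallback is unnecessary.
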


\begin{cor}\label{scott_cor}
Let $F$ be a field and let $V$ be an irreducible nonprincipal $FG$-module.
Suppose $G$ is generated by $x_1,\ldots,x_n$ with $x_1\ldots x_n=1$.
Then 
\begin{equation}\label{scott_ineq}
\sum_{i=1}^n\operatorname{dim}C_V(x_i)
\leqslant (n-2)\operatorname{dim}V.
\end{equation}
\end{cor}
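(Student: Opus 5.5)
The corollary is derived directly from Scott's inequality in Proposition~\ref{scott_thm}, applied to the $G$-module $V$. Two elementary facts do the work: for any subgroup or element $X$ we have $\nu(X)=\dim V-\dim C_V(X)$, and the dual module $V^*$ is again irreducible and nonprincipal.

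First compute $\nu(G)$ and $\nu(G^*)$. Since $V$ is irreducible, the fixed-point space $C_V(G)$ is a submodule, so it equals $0$ or $V$. If $C_V(G)=V$, then $G$ acts trivially on $V$, and irreducibility forces $\dim V=1$, so $V$ is the principal module. This is excluded, so $C_V(G)=0$ and $\nu(G)=\dim V$.

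For the dual, $V^*$ is irreducible because submodules of $V^*$ correspond to annihilators of submodules of $V$. It is also nonprincipal: if $G$ acted trivially on $V^*$, it would act trivially on $V\cong V^{**}$, since $V$ is finite-dimensional as an irreducible module of a finite group. By the same argument as above, $C_{V^*}(G)=0$ and $\nu(G^*)=\dim V$.

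Now substitute into Proposition~\ref{scott_thm}:
$$
\sum_{i=1}^n\bigl(\dim V-\dim C_V(x_i)\bigr)\geqslant 2\dim V.
$$
This rearranges to $\sum_i\dim C_V(x_i)\leqslant (n-2)\dim V$, which is (\ref{scott_ineq}).

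The argument is routine. The only point needing care is checking that the dual module is still irreducible and nonprincipal, which uses finite-dimensionality.
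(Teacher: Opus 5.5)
Your proposal is correct and follows the paper's proof: you get $\nu(G)=\nu(G^*)=\dim V$ from irreducibility and nonprincipality, write $\nu(x_i)=\dim V-\dim C_V(x_i)$, and substitute into Proposition~\ref{scott_thm}. The only difference is that you spell out the routine check, which the paper leaves implicit, that $V^*$ is again irreducible and nonprincipal.
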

\begin{proof} We have $\nu(G)=\nu(G^*)=\operatorname{dim}V$ because $V$ is nonprincipal and irreducible.
Also, $\nu(x_i)=\operatorname{dim}V-\operatorname{dim}C_V(x_i)$ for all $i$ by definition. The claim now follows from Proposition~\ref{scott_thm}.
\end{proof}

The next result about certain values of $\alpha$ for sporadic almost simple groups is a consequence of \cite[Theorem 3.1(1)]{DMPZ}, \cite[Table 2]{fmow}, and, in part, \cite{RZ25} and the references therein.

\begin{prop}\label{alpha} Let $S$~be a sporadic group and let $x\in \operatorname{Aut}S$ with $|x|>2$. Then $\alpha^{\phantom{S}}_{S}(x)=2$, except in the following cases:
    \begin{itemize}
         \item[$\bullet$] $(S,x^S)\in\big\{(J_2,3A)$, $(HS,4A)$, $(McL,3A)$, $(Ly,3A)$, $(Co_1,3A)$, $(F_{22},3A)$, \\ $(Fi_{22},3B)$,
         $(Fi_{23},3A)$, $(Fi_{23},3B)$, $(Fi_{24}',3A)$, $(Fi_{24}',3B)$, $(HN,4D)\big\}$ and $\alpha^{\phantom{S}}_{S}(x)=3$;
     \item[$\bullet$] $(S,x^S)=(Suz,3A)$ and $\alpha^{\phantom{S}}_{S}(x)=4$,
    \end{itemize}
\end{prop}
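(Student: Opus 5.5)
The statement to prove is Proposition~\ref{alpha}, and I would not attempt an independent proof. Instead I would assemble it from the cited literature, checking that each sporadic pair $(S,x^S)$ with $|x|>2$ is covered.

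\textbf{Inner automorphisms.} For $x\in S$, \cite[Theorem 3.1(1)]{DMPZ} lists the values of $\alpha(x)$ for all nonidentity classes of sporadic groups. I would extract from it the classes of elements of order at least $3$ with $\alpha(x)>2$. The claim is that these are exactly the classes in the two bullets, with the stated values $3$ and $4$. Some entries in \cite{DMPZ} may only be bounds, for instance $\alpha\leqslant 3$ without an exact value. For those I would use the later refinements in \cite{RZ25} and the references therein, which settle the remaining exact values; the $Fi_{22}$, $Fi_{23}$ and $Fi_{24}'$ cases with $x$ in $3A$ or $3B$ are the ones I expect to need this.

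\textbf{Outer automorphisms.} For $x\in\operatorname{Aut}S\setminus S$, I would use \cite[Table 2]{fmow}, which computes $\alpha$ for outer classes of the almost simple sporadic groups. From it I would read off that the only outer class of order greater than $2$ with $\alpha>2$ is $4D$ in $HN{:}2$, where $\alpha=3$. One point needs care: an outer class $x^{S}$ might split into several $S$-classes, but $\alpha$ depends only on the $\operatorname{Aut}S$-class, so this does not affect the result.

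\textbf{Main obstacle.} The difficulty is bookkeeping rather than mathematics: reconciling the class labels used in the different sources, and making sure no class is dropped. If one of the exact values $3$ were missing from the literature, I would argue as follows.
\begin{itemize}
\item The upper bound comes from exhibiting three explicit conjugates of $x$ in \texttt{GAP} that generate $S$.
\item The lower bound comes from either Proposition~\ref{DiM_Zal} or Corollary~\ref{scott_cor}. With Corollary~\ref{scott_cor}, one takes a small irreducible module $V$ on which $\dim C_V(x)$ is large, so that $2\dim C_V(x)+\dim C_V((x_1x_2)^{-1})>\dim V$ for every possible product class. This shows that two conjugates of $x$ cannot generate $S$.
\end{itemize}

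For $(Suz,3A)$, the value $\alpha=4$ is the known result from \cite{DMPZ}. Its lower bound rests on the fact that $3A$ elements act on the $12$-dimensional module for $6\cdot Suz$ over $\mathbb{F}_4$ with fixed space of dimension $8$. Applying Scott's bound, $3\cdot 8\leqslant 2\cdot 12=24$ fails only narrowly, so no three conjugates generate $S$, and hence $\alpha>3$.
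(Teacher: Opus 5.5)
Your core plan is the same as the paper's. The paper does not prove Proposition~\ref{alpha} independently; it reads it off from \cite[Theorem~3.1(1)]{DMPZ}, \cite[Table~2]{fmow} and, in part, \cite{RZ25}. As long as you rely on these sources, including for $\alpha_{Suz}(3A)=4$, your argument is fine. Two of your explanatory remarks, however, are wrong and should be removed.

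\textbf{The Scott argument for $(Suz,3A)$ does not work.}
\begin{itemize}
\item \emph{The arithmetic gives no contradiction.} For three conjugates $x_1,x_2,x_3$ generating the group, Corollary~\ref{scott_cor} is applied with $n=4$ and $x_4=(x_1x_2x_3)^{-1}$. It then reads $\sum_{i=1}^{4}\dim C_V(x_i)\leqslant 2\cdot 12=24$. With $\dim C_V(x_i)=8$ for $i\leqslant 3$ this becomes $24+\dim C_V(x_4)\leqslant 24$, which holds whenever $x_4$ has no nonzero fixed vector. So $3\cdot 8=24$ is an equality, not a violation. The cruder bound $\sum_{i\leqslant 3}\bigl(\dim V-\dim C_V(x_i)\bigr)\geqslant\dim V$ is likewise only met with equality.
\item \emph{The input is false.} The paper records that class $3a$ of $G_2(4)$ fuses to $3A$ of $Suz$, and $C_{G_2(4)}(3a)=SL_3(4)$. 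An order-$3$ lift of such an element is central in the preimage of this $SL_3(4)$. That preimage acts on the $12$-dimensional module as a sum of two $6$-dimensional modules on which the lift acts by different scalars. Hence the lifts have eigenvalue multiplicities $6,6,0$, so their fixed spaces have dimension $0$ or $6$, never $8$.
\end{itemize}
So this module yields nothing beyond $\alpha\geqslant 2$, and the value $4$ has to be taken from \cite{DMPZ} as cited.

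\textbf{Outer classes do not split.} If $x\in\operatorname{Aut}S\setminus S$, then $x\in C_{\operatorname{Aut}S}(x)$, so $C_{\operatorname{Aut}S}(x)\not\leqslant S$. Since $|\operatorname{Aut}S:S|\leqslant 2$, this gives $x^{\operatorname{Aut}S}=x^S$. The remark is harmless for your conclusion but inaccurate as stated.
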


For our purpose, we will also need the following values of $\alpha$ for specific groups.

\begin{lem}\label{more_alpha}
If $(S,x^S)\in\big\{(U_5(2),3C),(U_5(2),3D),(U_6(2),3B)\big\}$ then 
$\alpha^{\vphantom{A}}_S(x)=3$.
\end{lem}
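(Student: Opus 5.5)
We need to prove two inequalities for each of the three pairs: a lower bound $\alpha_S(x)\geqslant 3$ (two conjugates never generate $S$) and an upper bound $\alpha_S(x)\leqslant 3$ (three suitable conjugates do). All three classes consist of elements of order $3$. In $U_5(2)$ and $U_6(2)$ these are the classes of elements with large centralizers, namely elements acting on the natural module with a fixed space of large dimension. The groups here are the simple groups themselves, which are centreless, and the outer automorphisms do not matter because $x$ is inner. So $\alpha_S(x)$ is the least number of $S$-conjugates of $x$ generating $S$.

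For the lower bound I will use Corollary~\ref{scott_cor} with $n=3$, in the form of a dimension count. Suppose $S=\langle x_1,x_2\rangle$ with $x_1,x_2\in x^S$. Put $x_3=(x_1x_2)^{-1}$; then $S=\langle x_1,x_2,x_3\rangle$ and $x_1x_2x_3=1$. Take $V$ to be the natural module $\mathbb F_4^m$ ($m=5,6$), which is irreducible and nonprincipal for the perfect group $SU_m(2)$, acting projectively, or better take the preimage in $SU_m(2)$. Inequality~(\ref{scott_ineq}) then gives
\[
\dim C_V(x_1)+\dim C_V(x_2)+\dim C_V(x_3)\leqslant m.
\]
If the classes $3C,3D$ of $U_5(2)$ and $3B$ of $U_6(2)$ are the ones with $\dim C_V(x)\geqslant 3$ (equivalently, $\nu(x)\leqslant 2$ with eigenvalue pattern like $(\omega,\omega,1,\dots)$ or $(\omega,\bar\omega,1,\dots)$), then the sum of the first two terms is already at least $2(m-2)$. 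This exceeds $m$ whenever $m\geqslant 5$, which is a contradiction. One complication is that central lifts may scale the fixed spaces; in that case I would instead use eigenspaces of the lift, or apply Scott's theorem to the adjoint module. If the Scott count is not quite sharp for some class, the fallback is Proposition~\ref{DiM_Zal}: compute $\mathrm m(C,C,D)$ via (\ref{mcc}) in \texttt{GAP} for every class $D$ and check that $\mathrm m(C,C,D)<|C_S(d)|$ for all $D$.

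For the upper bound I will produce an explicit triple by computer. In \texttt{GAP}, take $S$ as a permutation or matrix group from the Atlas generators, pick $x$ in the class (identified by its centralizer order), and search random conjugates $x^{g},x^{h}$ until $\langle x,x^g,x^h\rangle$ has order $|S|$. A cleaner alternative is a character-theoretic certificate: show that some class $D$ has $\mathrm n(C,C,C,D)$ exceeding the contributions from all maximal subgroups containing such tuples. That route is heavier, so the random search is the primary plan.

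The main obstacle is the lower bound: identifying exactly which module and eigenvalue data correspond to the \texttt{GAP}/Atlas labels $3C$, $3D$ and $3B$, so that Scott's inequality really bites. I would first confirm the fixed-space dimensions by computing Brauer character values of the natural module in \texttt{GAP}. If that fails, I would rely on the structure-constant computation via Proposition~\ref{DiM_Zal}.
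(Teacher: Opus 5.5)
\textbf{There is a genuine gap in the lower bound for $(U_6(2),3B)$.} Your upper-bound plan (finding three conjugates that generate $S$ by explicit computation) is fine; the paper does exactly this for $U_6(2)$. The problem is the lower bound, which rests on a wrong identification of the classes.

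On the natural module, $3C$ and $3D$ of $U_5(2)$ are represented by $\mathrm{diag}(1,1,\omega,\omega,\omega)$ and its inverse. So $\dim C_V(x)=2$, and $2+2\leqslant 5$ gives no contradiction. Your pattern $(\omega,\omega,1,1,1)$ has determinant $\omega^2\neq 1$, so it is not in $SU_5(2)$, and $(\omega,\bar\omega,1,1,1)$ is class $3E$.

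For $U_5(2)$ this is harmless. Replacing $x_1,x_2$ by $\omega^{-1}x_1,\omega^{-1}x_2$, which still generate a group containing $S$, gives $3+3>5$. Your structure-constant fallback also works there, as in the paper.

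The real problem is $U_6(2)$. A lift of $x\in 3B$ to $SU_6(2)$ is, up to a scalar, $\mathrm{diag}(1,1,1,\omega,\omega,\omega)$, so every eigenspace has dimension at most $3$. Even with scalar twisting, the best count on the natural module is $3+3+\dim E_\lambda(\cdot)\leqslant 6$ with $\lambda\in\mathbb F_4^{*}$. This gives a contradiction only if the lift $x_0y_0$ of $xy$ has an eigenvalue in $\mathbb F_4^{*}$.

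Lifts of elements of $7A$ and $9C$ have only primitive $7$th, $21$st or $9$th roots of unity as eigenvalues. These are precisely the two classes for which $\mathrm m(3B,3B,nX)\geqslant |C_S(t)|$, so Proposition~\ref{DiM_Zal} is inconclusive there. Both tools you commit to therefore fail on exactly the same cases, and $S=\langle x,y\rangle$ with $xy\in 7A\cup 9C$ is never excluded.

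\textbf{What is missing is a larger module for these residual cases.} The paper first uses structure constants to reduce to $7A$ and $9C$. It then applies Corollary~\ref{scott_cor} to the $15$-dimensional module $\wedge^2V$ of $SU_6(2)$. On this module the lifts in $3f$ fix $9$ dimensions and $7a$, $9g$ fix $3$, and $9+9+3>15$.

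The adjoint module you mention in passing would in fact do the whole job uniformly, without structure constants. Take the $34$-dimensional irreducible factor $W=\mathfrak{sl}_6/\langle I\rangle$; note that $I\in\mathfrak{sl}_6$ in characteristic $2$. An element of $3B$ fixes $16$ dimensions of $W$. Every $z$ fixes at least $4$, since $\dim C_{\mathfrak{gl}_6}(z)\geqslant 6$. Then $16+16+4>34$.

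However, you offered the adjoint module only as a patch for the scalar ambiguity. You did not recognise that the natural module is intrinsically too small here, and you did not carry out the computation. As written, the proposal does not establish $\alpha_{U_6(2)}(3B)\geqslant 3$.
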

\begin{proof}  We treat the groups $U_5(2)$ and $U_6(2)$ separately.

$\bullet$ {\em Case $S=U_5(2)$.} Classes $3C$ and $3D$ of $S$ are mutually inverse, so we may consider either. Calculation \cite[Sec.~2]{RZG} shows  that  $\operatorname{m}(3C, 3C, 12A )>0$
and $\operatorname{m}(3C, 12A, 11A )>0$. Hence, there are three elements in $3C$ that generate a subgroup $H$ with elements of both orders $11$ and $12$. By \cite{atlas}, the only maximal subgroup of $S$ of order divisible by $11$ is $L_{2}(11)$ which has no elements of order $12$. Therefore, $H=S$ and $\alpha(3C)\leqslant 3$.

Next, running through all\footnote{We need not exclude the classes $nX$ with $\operatorname{m}(3C,3C,nX)=0$ because the required inequality is trivially satisfied in these cases.} conjugacy classes $nX$ of $S$, we check in \cite[Sec.~3]{RZG} that the inequality $\operatorname{m}(3C,3C,nX)<|C_S(x)|$ always holds, where $x\in nX$. Proposition~\ref{DiM_Zal} now implies that $S$ cannot be generated by a pair of elements in $3C$, and so ${\alpha(3C)=3}$.  

$\bullet$ {\em Case $S=U_6(2)$.} First, we show that  $\alpha^{\vphantom{A}}_S(x)\leqslant 3$ for $x\in 3B$. In terms of standard generators $a$ and $b$ of orders $2$ and $7$, $S$ is represented in \texttt{GAP} as a permutation group of degree $672$. We check in \cite[Sec.~4]{RZG} that $x=(ab^2)^6$ lies in class $3B$ and $x$, $x^a$, and $x^b$ generate the whole of $S$. Hence,  $\alpha^{\vphantom{A}}_S(x)\leqslant 3$.

Suppose that $\alpha^{\vphantom{A}}_S(x)=2$ and let $S=\langle x,y\rangle$ with $x,y\in 3B$. Calculation \cite[Sec.~5]{RZG} shows that  $\operatorname{m}(3B,3B,nX)\geqslant |C_S(t)|$ with $t\in nX$ only when $nX$ is $7A$ or $9C$. We consider these two possibilities separately by lifting the problem to $T=SU_6(2)$ and then applying (\ref{scott_ineq}).

$\diamond\ $ {\em Subcase $nX=7A$.}  We have $xy\in 7A$. Class $3B$ of $S$ lifts to classes $3f$, $3g$, and $3h$ of $T$, and class $7A$ lifts to $7a$, $21a$, and $21b$,  see \cite{atlas}. Let $x_0,y_0\in 3f$ be preimages in $T$ of $x$ and $y$. Then $\langle x_0,y_0\rangle=T$ and, necessarily, $z_0=x_0 y_0\in 7a$ because in $T$ we have
$\operatorname{m}(3f,3f,21a)=\operatorname{m}(3f,3f,21b)=0$,
see \cite[Sec.~6]{RZG}. Let $V$ be either of the two $15$-dimensional irreducible $\overline{\mathbb{F}}_2T$-modules with Brauer character~$\varphi$ \cite{GAP} whose values on some conjugacy classes of $T$ are given in (\ref{br15}).
\begin{equation}\label{br15}
    \begin{tabular}{c|cccc}
     $g$    & $1a$ & $3f$ & $7a$ & $9g$ \\
         \hline
    $\varphi(g)^{\vphantom{A^A}}$  & 15  & 6 & 1 & 0
    \end{tabular}
\end{equation}
Then the sum 
\begin{equation}\label{cx0y0z0}
\operatorname{dim}C_V(x_0)+\operatorname{dim}C_V(y_0)+\operatorname{dim}C_V(z_0)=2(\varphi_{\langle x_0\rangle},1_{\langle x_0\rangle})+(\varphi_{\langle z_0\rangle},1_{\langle z_0\rangle})
\end{equation}
equals
$$
\frac{2}{3}\big(\varphi(1a)+2\cdot\varphi(3f)\big)+\frac{1}{7}\big(\varphi(1a)+6\cdot\varphi(7a)\big)\\
=18+3=21.
$$
This is bigger than $\operatorname{dim}V=15$ which contradicts the fact that $T=\langle x_0,y_0\rangle$ by Corollary~\ref{scott_cor}.

$\diamond\ $ {\em Subcase $nX=9C$.} In this subcase, $xy$ is in class $9C$ which lifts to a unique class $9g$ of $T$. Therefore, $z_0=x_0 y_0\in 9f$, where  
as above $x_0,y_0\in 3f$ are preimages in $T$ of $x,y$. Considering 
the same $15$-dimensional $\overline{\mathbb{F}}_2T$-module $V$ as above we obtain that the sum in (\ref{cx0y0z0}) equals
$$
\frac{2}{3}\big(\varphi(1a)+2\cdot\varphi(3f)\big)+\frac{1}{9}\big(\varphi(1a)+2\cdot\varphi(3f)+0\cdot\varphi(9g)\big)=18+3=21,
$$
because $9g$ cubes to $3f$, and this also contradicts Corollary~\ref{scott_cor}. Therefore, $\alpha^{\vphantom{A}}_S(x)=3$ as claimed.
\end{proof}

\section{Proof of main result}

We now prove Theorem~\ref{main}.

\begin{proof} Let $x\in \operatorname{Aut} S$ with $|x|>2$.
We may assume that $\alpha(x)>2$. By Proposition~\ref{alpha}, this means that $S$ and $x^S$ are
as listed in the first two columns of Table~\ref{tab:primes_bigger}.

If it happens that $\operatorname{m}(x^S,x^S,y^S)>0$ for some $y\in S$ of order divisible by $r$, where $r$ is a prime divisor of $|S|$ such that $r\nmid |x|$, then $\beta_{r,S}(x)=2$. All such cases can be found uniformly using \texttt{GAP}, see \cite[Sec.~1]{RZG}. They are listed in column 3 of Table~\ref{tab:primes_bigger}.

We now consider the cases from columns 4 and 5 of Table~\ref{tab:primes_bigger} one by one. A working method here is determining the class fusion of subgroups in a group. In all cases below, this can be done using \texttt{GAP}'s function \texttt{PossibleClassFusions}, see \cite{RZG} for detailed calculations. We also use explicit constructions for sporadic almost simple groups in terms of the so-called {\em standard} generators \cite{Wil96} as mentioned in the online atlas \cite{AtlRep}.

$\bullet$ $(S,x^S,r)= (Suz,3A,7)$. There are three elements in class $3A$ of $Suz$
whose product has order $7$. This is because $\operatorname{n}(3A,3A,3A,7A)>0$, see \cite[Sec.~10]{RZG}. Therefore, $\beta_7(3A)\leqslant 3$. Suppose that $\beta_7(3A)=2$.
Let $H$ be a subgroup of $S$ of order divisible by $7$ generated by two elements $x,y$ in $3A$. Since $\alpha(x) > 2$ (in fact, $\alpha(x)=4$) by Proposition~\ref{alpha}, $H$ is a proper subgroup contained in a maximal subgroup $M<S$ of order divisible by $7$. By \cite{atlas}, there are $5$ possibilities for $M$: 
$$G_2(4), \ \ 3_2 ^{\ {\displaystyle\cdot}\,} U_4(3)\!:\!2'_3, \ \ J_2\!:\!2, \ \ (A_4 \times L_3(4) )\!:\!2, \ \ A_7.$$

{\em Case $M=G_2(4)$.} Only class $3a$ of $G_2(4)$ fuses to $3A$ of $Suz$, see \cite[Sec.~11(a)]{RZG}. Using standard generators $a$ and $b$ of $G_2(4)$, we check \cite[Sec.~13]{RZG} that  $c=(abab^2)^5$ is a representative of class $3a$ and the order of any $(3a,3a)$-generated subgroup $\langle c, c^g\rangle$, $g\in M$, is not divisible by $7$ (or $13$, which we will use below). Hence, this case is impossible.

{\em Case $M=3_2 ^{\ {\displaystyle\cdot}\,} U_4(3)\,\mathord{:}\,2'_3$.} This group has $5$ conjugacy classes of elements of order~$3$. 
In Table~\ref{cls3}, we list their sizes, as well as the class sizes for the  elements of order $3$ in $3_2 ^{\ {\displaystyle\cdot}\,} U_4(3)$ and $U_4(3)$, see \cite[Sec.~16]{RZG}. In view of \cite[Sec.~11(b)]{RZG}, only classes $3a$ and $3b$ of $M$ fuse to $3A$ of $Suz$, hence $x,y$ belong to $3a\cup 3b$. Since $3a$ has size $2$, it lies in the $3$-radical $R$ of~$M$. This means that neither $x$ nor $y$ is in $3a$, or else the subgroup
$\langle x,y \rangle$ could not have order divisible by~$7$, because its image in the quotient by $R$ would be a group of order dividing~$3$.

\begin{table}[htb]
\caption{Class sizes of elements of order $3$ in sections of  
$3_2 ^{\ {\displaystyle\cdot}\,} U_4(3)\,\mathord{:}\,2'_3$.\label{cls3}}
\begin{tabular}{|l|r|c@{\quad}|l|r|c@{\quad}|l|r|}
\cline{1-2} \cline{4-5} \cline{7-8} 
\multicolumn{2}{|c|}{$3_2 ^{\ {\displaystyle\cdot}\,} U_4(3)\,\mathord{:}\,2'_3$}&&
\multicolumn{2}{|c|}{$3_2 ^{\ {\displaystyle\cdot}\,} U_4(3)$}&&
\multicolumn{2}{|c|}{$U_4(3)$ $\vphantom{S^{S^S}g_{g_{g_g}}}$} \\
\cline{1-2} \cline{4-5} \cline{7-8}
class&\multicolumn{1}{l|}{size}&&class&\multicolumn{1}{l|}{size}&&class& \multicolumn{1}{l|}{size$\vphantom{S^{S^S}}$}\\
\cline{1-2} \cline{4-5} \cline{7-8}
$3a$ & $2$      && $3a$, $3b$        & $1$      && $3a$       &  $  560\vphantom{S^{S^S}}$ \\
$3b$ & $560$    && $3c$, $3d$, $3e$  & $560$    && $3b$, $3c$ &  $ 3360$ \\
$3c$ & $1120$   && $3f$, $3g$        & $10080$  && $3d$       &  $40320$ \\
$3d$ & $20160$  && $3h$              & $120960$ &&            &          \\
$3e$ & $120960$ &&                   &          &&            &          \\
\cline{1-2} \cline{4-5} \cline{7-8}
\end{tabular}
\end{table}

Thus, $x$ and $y$ are both in class $3b$ of $M$. From the class sizes, we see that class $3b$ of $M$ is one of the classes $3c$, $3d$, or $3e$ of  
$3_2^{\ {\displaystyle\cdot}\,} U_4(3)$   (the other two being merged 
to $3c$ of $M$ by the outer automorphism $2'_3$). All classes  $3c$, $3d$, and $3e$ of $3_2 ^{\ {\displaystyle\cdot}\,}U_4(3)$ map to $3a$ of $U_4(3)$
under the natural epimorphism $3_2^{\ {\displaystyle\cdot}\,} U_4(3)\to U_4(3)$. In particular, the images $\overline{x},\overline{y}\in 3a$ generate in $U_4(3)$ a subgroup of order divisible by~$7$.

The group $U_4(3)$ has standard generators $a$ and $b$ of orders $2$ and $6$. We check \cite[Sec.~14]{RZG} that $c=b^2$ has centraliser of order $5832$, which implies that $c\in 3a$. 
By considering the subgroups $\langle c,c^g\rangle$, $g\in U_4(3)$, we conclude that no $(3a,3a)$-generated subgroup of $U_4(3)$ has order divisible by $7$, a contradiction.

{\em Case $M=J_2\!:\!2$.} In $M$, there are two conjugacy classes $3a$ and $3b$ of elements of order $3$, but only $3a$ fuses to $3A$ of $Suz$, see \cite[Sec,~11]{RZG}. Since $x,y$ lie in the subgroup $J_2$ of $M$, so does $H = \langle x,y \rangle$ and, inasmuch as $\alpha^{\vphantom{A}}_{J_2}(3a)=3$ (see \cite[Theorem 3.1(1a)]{DMPZ}), $H$ is a proper subgroup of $J_2$.  So, $H$ lies in a maximal subgroup $M_1$ of $J_2$ of order divisible by $7$. By \cite{atlas}, $M_1$ is either $U_3(3)$ or $\operatorname{PGL}_2(7)$.

Suppose that $M_1=U_3(3)$. Class fusion shows \cite[Sec.~12]{RZG} that $3a$ is the only class in $M_1$ that fuses to $3a$ of $J_2$. The standard generators $a$ and $b$ have orders $2$ and $6$, and the centraliser of $c=b^2$ has size $108$, which implies that $c\in 3a$.
Running through all subgroups generated by a pair of elements in $c^{M_1}$, we see \cite[Sec.~15]{RZG} that none of them has order divisible by $7$, a contradiction.

Therefore, $M_1=\operatorname{PGL}_2(7)$. We also check in \cite[Sec.~12]{RZG} that the elements of order $3$ in $\operatorname{PGL}_2(7)$ fuse to class $3b$ of $J_2$ rather than $3a$. This contradiction shows that $M$ cannot be $J_2\!:\!2$.

{\em Case $M=(A_4 \times L_3(4) )\!:\!2$.} In this case, $M$ has three classes $3a$, $3b$, and $3c$ of elements of order $3$. The sizes of these classes are $2240$, $8$, and $17920$, respectively, see \cite[Sec.~17]{RZG}. Observe that $2240$ is also the size of (a unique) class $C_2$ of elements of order $3$ in $L_3(4)$, while $8$ is the sum of sizes of two classes of elements of order $3$ in $A_4$ which are merged to $C_1$ by outer involution. It follows from the structure of $M$
that $3a$ consists of the elements $(1,y)\in A_4 \times L_3(4)$, $y$ in $C_2$, and 
$3b$ consists of the elements $(x,1)\in A_4 \times L_3(4)$, $x$ in $C_1$.

Class fusion shows \cite[Sec.~11(f)]{RZG} that only class $3b$ fuses to $3A$ of $Suz$. However, the above description implies that every pair of elements of $3b$ generates a subgroup of $A_4$ whose order is not divisible by $7$, a contradiction.

{\em Case $M=A_7$.} It follows from class fusion \cite[Sec.~11(j)]{RZG} that no elements of order $3$ of $A_7$ fuse to class $3A$ of $Suz$, a contradiction.

Consequently, we have  
$$\beta_{7,Suz}(3A)=3.$$

$\bullet$ $(S,x^S,r)=(Suz,3A,11)$. By Proposition~\ref{alpha}, we have $\beta_{11}(3A)\leqslant 4$. Assume that $\beta_{11}(3A)\leqslant 3$ and let 
$H$ be the subgroup of $Suz$ of order divisible by $11$ generated by $3$ (not necessarily distinct) elements $x,y,z\in 3A$. Since $\alpha(x)=4$, $H$ is a proper subgroup contained in a maximal subgroup $M < S$ of order divisible by $11$.
By \cite{atlas}, there are three possibilities for $M$: 
$$U_5(2), \ \ 3^5\!:\!M_{11}, \ \ M_{12}\!:\! 2.$$

{\em Case $M=U_5(2)$.} Class fusion shows \cite[Sec.~11(c)]{RZG} that 
only elements of classes $3a$ and $3b$ in $U_5(2)$ fuse to $3A$ in $Suz$. Consequently, $x,y,z$ lie in $3a\cup 3b$. Observe that $3a$ and $3b$ of $U_5(2)$ are merged by an outer automorphism 
into a single class $3a$ of $U_5(2).2$. So, is suffices to check whether any $(3a,3a,3a)$-generated subgroup of $U_5(2).2$ has order divisible by $11$. Using the standard generators $a$ and $b$ of $U_5(2).2$, where  $|a|=2$ and $|b|=4$, we show \cite[Sec.~18]{RZG} that $c=(ab(ab^2)^2)^4$ has order $3$ and centraliser $C$ in $U_5(2).2$ of size $77760$. In particular,  $c$ is in class $3a$ of $U_5(2).2$.  By considering representatives $(g,h)$ of  the orbits of $C$ on the pairs in $3a\times 3a$, we find that every group $\langle c,g,h\rangle$ is a $\{2,3\}$-group and, therefore, has order not divisible by $11$, a contradiction.

{\em Case $M=3^5\!:\!M_{11}$.} It follows from class fusion \cite[Sec.~11(d)]{RZG}
that only class $3a$ of $M$ fuses to $3A$ of $Suz$; hence, $x,y,z$ lie in $3a$ of $M$.
However, class $3a$ (and also $3b$) of $M$ lies in the $3$-radical of $M$. This can be seen from the character table \cite[Sec.~19]{RZG}, where the first $10$ irreducible  characters of $M$ 
are unfaithful with classes $3a$ and $3b$ in their kernels (they are the lifts to $M$ of the $10$ irreducible characters of $M_{11}$). In particular, $\langle x,y,z\rangle$ cannot have order divisible by $11$, a contradiction.

{\em Case $M = M_{12}\!:\! 2$.} From class fusion, we see \cite[Sec.~11(g)]{RZG} that no elements of order $3$ in $M$ fuse to $3A$ of $Suz$, a contradiction.

Consequently, we obtain 
$$\beta_{11,Suz}(3A)=\alpha^{\vphantom{A}}_{Suz}(3A)=4.$$

$\bullet$ $(S,x^S,r)=(Suz,3A,13)$.
Calculation \cite[Sec.~10]{RZG} shows that there are three elements in class $3A$ of $Suz$
whose product has order $13$. This holds because $\operatorname{n}(3A,3A,3A,13A)>0$. Therefore, $\beta_{13}(3A)\leqslant 3$. 

Suppose that $\beta_{13}(3A)=2$. Let $H$ be a subgroup of $S$ of order divisible by $13$ generated by $x,y\in 3A$. Since $\alpha(x) > 2$, $H$ is a proper subgroup contained in a maximal subgroup $M<S$ of order divisible by $13$. By \cite{atlas}, there are 
three possibilities for $M$: 
$$G_2(4), \ \  L_3(3):2, \ \ L_2(25).$$

{\em Case $M = G_2(4)$}. As we have mentioned in the case $r=7$ above, only class $3a$ of $M$
fuses to $3A$ of $Suz$ and there are no $(3a,3a)$-generated subgroups of $M$ of order divisible by $13$, a contradiction.

{\em Cases $M = L_3(3)\!:\!2$ and $L_2(25)$}. Using class fusion we check \cite[Secs.~11(h), 11(i)]{RZG} that no elements of order $3$ of $M$ fuse to $3A$ of $Suz$, a contradiction.

Therefore, we have 
$$\beta_{13,Suz}(3A)=3.$$

$\bullet$ $(S,x^S,r)=(Fi_{22},3B,11)$. From Proposition~\ref{alpha}, it follows that $\beta_{11}(x)\in \{2,3\}$. Assume that $\beta_{11}(x)=2$. Then $H=\langle x, y\rangle$ has order divisible by $11$ for some $y\in 3B$. Such an $H$ must be a proper subgroup because $\alpha(x)=3$, and so $H\leqslant M<S$, where $M$ is a maximal subgroup of order divisible by $11$.
According to \cite{atlas}, $M$ is isomorphic to one of 
$$ 2^{\textstyle \cdot}U_6(2), \ \ 2^{10}\!:\!M_{22},  \ \text{or} \ M_{12}.$$

Suppose $M\cong 2^{10}\!:\!M_{22}$. Note that $M$ has a unique conjugacy class $3a$ of elements of order $3$, because so does $M_{22}$. We check \cite[Sec.~7(b)]{RZG} that this class fuses to class $3C$ of $S$ rather than $3B$. Hence $x\not\in M$, a contradiction.

Suppose $M\cong M_{12}$. Then classes $3a$ and $3b$ of $M$ fuse to classes $3D$ and $3C$ of~$S$, respectively, \cite[Sec.~7(c)]{RZG}. Again, $x$ cannot be in $M$ because $x\in 3B$, a contradiction.

Therefore, $M\cong 2^{\textstyle \cdot}U_6(2)$. Only class $3b$ of $M$ fuses into $3B$, see \cite[Sec.~7(a)]{RZG}, and $3b$ of $M$ maps to class $3b$ of $M_0=U_6(2)$ under the natural epimorphism $\tau: M\to M_0$, see \cite{atlas}. Therefore, $H_0=\langle x_0,y_0\rangle \leqslant M_0$ has order divisible by $11$, where $x_0=\tau(x)$ and $y_0=\tau(y)$.

By Lemma~\ref{more_alpha}, we have $\alpha^{\vphantom{A}}_{M_0}(3b)=3$ which implies that $H_0$ is a proper subgroup of $M_0$, and so $H_0\leqslant M_1$ for a maximal subgroup $M_1<M_0$ whose order is divisible by $11$. Up to isomorphism, $M_1$ is either $U_5(2)$ or $M_{22}$, see \cite{atlas}. However, $3a$, the only class of $M_{22}$ of elements of order $3$, fuses to $3c$ of $M_0$ rather than $3b$, see \cite[Sec.~8(b)]{RZG}. Therefore, $M_1\cong U_5(2)$. Again, analysing class fusion \cite[Sec.~8(a)]{RZG} we see that classes $3c$ and $3d$ of $M_1$ fuse to $3b$ of $M_0$.

Since $\alpha^{\vphantom{A}}_{M_1}(3c)=\alpha^{\vphantom{A}}_{M_1}(3d)=3$ by Lemma~\ref{more_alpha}, we conclude that $H_0$ is a proper subgroup of $M_1$, and so $H_0\leqslant M_2$ for a maximal subgroup $M_2<M_1$ of order divisible by $11$. Up to isomorphism, the only possibility is $M_2\cong L_2(11)$. However the only class $3a$ of $M_2$ of elements of order $3$ fuses to $3f$ of $M_1$, rather than $3c$ or $3d$, see \cite[Sec.~9]{RZG}. This final contradiction shows that 
$$\beta_{11,Fi_{22}}(3B)=\alpha^{\vphantom{A}}_{Fi_{22}}(3B)=3.$$

$\bullet$ $(S,x^S,r)= (J_2,3A,7)$. The Janko group $J_2$ has standard generators $a$ and $b$, where $a\in 2B$ and $b\in 3B$. We check in \cite[Sec.~20]{RZG} that $c=(abab^2)^4$ is in class $3A$, and all subgroups $\langle c,d\rangle$, where $d$ runs through representatives of the orbits of $C_S(c)$ on $3A$, have order not divisible by $7$. Therefore, 
$$\beta_{7,J_2}(3A)=\alpha^{\vphantom{A}}_{J_2}(3A)=3.$$

$\bullet$ $(S,x^S,r)= (HS,4A,11)$. We show in \cite[Sec.~21]{RZG} that in terms of the standard generators $a$ and $b$ of $HS$, where $a\in 2A$ and $b\in 5A$, the element $c=(ab(ab^3)^2)^3$ lies in $4A$ and all subgroups $\langle c,c^g\rangle$, $g\in S$, have order not divisible by $11$. Therefore, 
$$\beta_{11,HS}(4A)=\alpha^{\vphantom{A}}_{HS}(4A)=3.$$

$\bullet$ $(S,x^S)= (McL,3A)$, $r=7,11$. The McLaughlin group $McL$ has standard generators $a$ and $b$, where $a\in 2A$ and $b \in 5A$. We check in \cite[Sec.~22]{RZG} that $c=(ab^2)^4$ has order $3$ and centraliser of order $29160$. Thus, $c\in 3A$. Again, it is shown in \cite[Sec.~22]{RZG} that all subgroups $\langle c,c^g\rangle$, $g\in S$, have order not divisible by~$7$ or $11$. It follows that 
$$\beta_{r,McL}(3A)=\alpha^{\vphantom{A}}_{McL}(3A)=3.$$
\end{proof}

In Table~\ref{tab:beta_unknown}, we have collected the prime divisors $r$ of $|S|$ that do not occur in Table~\ref{tab:primes_bigger}. The estimates $2\leqslant\beta_r(x)\leqslant 3$ for those primes $r$ are a consequence of Proposition~\ref{alpha}. Corollary~\ref{r235} now also follows from Table~\ref{tab:primes_bigger}.

\section{Final remarks and open problems}

It is conjectured in \cite[Conjecture~2]{YRV} that, for every nonabelian simple group $S$ of order divisible by an odd prime~$r$ and a nonidentity automorphism $x$ of $S$, the following inequality holds:
\begin{equation}\label{hyp}
\beta_{r,S}(x)\leqslant
\left\{
\begin{array}{ll}
  3,   & \text{if } r=3,  \\
  r-1,   & \text{if } r>3.
\end{array}
\right.
\end{equation}
At present, this conjecture is confirmed for many simple groups (see \cite{YRV,YWR,YWRV,RZ1,WGR}, etc.) including the sporadic groups \cite{YWR} as was mentioned above.  For all $r$, there are examples where the equality in (\ref{hyp}) is attained. For instance, $\beta_{3,S}(x)=3$ if $S=L_2(27)$ and $x\in S$ is an involution, and $\beta_{r,S}(x)=r-1$ if $S=A_r$ for $r>3$ and $x\in S_r$ is a transposition. In all known such examples, $|x|=2$. It is natural to conjecture that the equality is attainable only when  $x$~is an involution. The results of the present paper show that this is indeed so for the sporadic groups.

To study the sharp $\pi$-analogue of the Baer--Suzuki theorem \cite[Conjecture~1]{YRV} it would be interesting and useful to classify all pairs $(S,x^S)$ for which the equality in (\ref{hyp}) holds. The case $r=3$ is especially interesting, see \cite[Problem~21.111]{Kour}.


\begin{thebibliography}{10}

\bibitem{atlas} J.~H.~Conway, R.~T.~Curtis, S.~P.~Norton, R.~A.~Parker, R.~A.~Wilson, Atlas of finite groups: maximal subgroups and ordinary characters for simple groups. Oxford. Clarendon Press (1985), xxxiii + 252 pp.

\bibitem{DMPZ} L. Di\,Martino, M.\,A. Pellegrini, A.\,E. Zalesski, On generators and representations of the sporadic simple groups. \textit{Comm. Algebra}, {\bf 42}, N\,2 (2014), 880--908.

DOI: \href{https://doi.org/10.1080/00927872.2012.729629}{\texttt{10.1080/00927872.2012.729629}}

\bibitem{fmow} J. Fawcett, J. M\"uller, E. O'Brien, R. Wilson, Regular orbits of sporadic simple groups, {\em J. Algebra}, {\bf 522} (2019) 61--79.
DOI: \href{https://doi.org/10.1016/j.jalgebra.2018.11.034}{\texttt{10.1016/j.jalgebra.2018.11.034}}

\bibitem{FGG} P. Flavell,  S. Guest, R. Guralnick, Characterizations of the solvable radical, \textit{Proc. Amer. Math. Soc.}, {\bf 138}:4 (2010), 1161--1170. DOI: \href{https://doi.org/10.1090/S0002-9939-09-10066-7}{\texttt{10.1090/S0002-9939-09-10066-7}}

\bibitem{GAP} The GAP Group, GAP --- Groups, Algorithms, and Programming, Version 4.13.0 (2024). URL: \url{http://www.gap-system.org}

\bibitem{GS} R. Guralnick, J. Saxl, Generation of finite almost simple groups by conjugates,
\textit{J. Algebra}, {\bf 268} (2003), 519--571. DOI: \href{https://doi.org/10.1016/S0021-8693(03)00182-0}{\texttt{10.1016/S0021-8693(03)00182-0}}

\bibitem{Kour} E.\,I. Khukhro, V.\,D. Mazurov (ed.). {\em Unsolved Problems in Group Theory. The Kourovka Notebook.} No. 21. Novosibirsk, 2026,  electronic, 299 pp. URL: \url{https://kourovkanotebookorg.wordpress.com}

\bibitem{mm} G. Malle, B.\,H. Matzat, Inverse {Galois} theory, 2nd ed., Springer Monogr. Math., Berlin: Springer (2018). DOI: \href{https://doi.org/10.1007/978-3-662-55420-3}{\texttt{10.1007/978-3-662-55420-3}}

\bibitem{R11} D.\,O. Revin, On Baer--Suzuki $\pi$-theorems, {\em Sib. Math. J.}, {\bf 52}, N\,2 (2011), 340--347. DOI: \href{https://doi.org/10.1134/S0037446611020170}{\texttt{10.1134/S0037446611020170}}

\bibitem{RZ1} D.\,O. Revin, A.\,V. Zavarnitsine, On generations by conjugate elements in almost simple groups with socle~${}^2F_4(q^2)'$,
\textit{J. Group Theory},   {\bf 27}, N\,1 (2024), 119--140. DOI: \href{https://doi.org/10.1515/jgth-2022-0216}{\texttt{10.1515/jgth-2022-0216}}
 
\bibitem{RZ25} D.\,O. Revin, A.\,V. Zavarnitsine,  Conjugate generation of sporadic almost simple groups, {\em Sib. Elect. Math. Reports}, {\bf 22}, N\,1 (2025), 552--562. DOI: \href{https://doi.org/10.33048/semi.2025.22.037}{\texttt{10.33048/semi.2025.22.037}}

\bibitem{RZG} D.\,O. Revin, A.\,V. Zavarnitsine, GAP code accompanying this paper  (2026). URL: \url{https://github.com/zavandr/refined-sporadic}

\bibitem{Scott} L.\,L. Scott, Matrices and cohomology, {\em Ann. Math.}, {\bf 105} (1977), 473--492. DOI: \href{https://doi.org/10.2307/1970920}{\texttt{10.2307/1970920}}

\bibitem{WGR} Zh. Wang, W. Guo, D.\,O. Revin, Toward a sharp Baer--Suzuki theorem for the $\pi$-radical: exceptional groups of small rank, {\em  Algebra and Logic}, {\bf 62}, N1 (2023), 1--21. DOI: \href{https://doi.org/10.1007/s10469-023-09720-3}{\texttt{10.1007/s10469-023-09720-3}}

\bibitem{Wil96} R.\,A. Wilson, Standard generators for sporadic simple groups, {\em J. Algebra}, {\bf 184}, N\,2 (1996),  505--515.  DOI: \href{https://doi.org/10.1006/jabr.1996.0271}{\texttt{10.1006/jabr.1996.0271}}

\bibitem{AtlRep} R.\,A. Wilson,  P.\,G. Walsh, J.\,Tripp, I.\,A.\,I. Suleiman, S.\, Rogers, R.\,A. Parker, S.\,P. Norton, S.\,J. Nickerson, S.\,A. Linton, J.\,N. Bray and R.\,A. Abbott, Atlas of Finite Group Representation (2025), Accessed: 2026-02-01. URL: \url{http://brauer.maths.qmul.ac.uk/Atlas/}

\bibitem{YRV} N. Yang, D.\,O. Revin, E.\,P. Vdovin, Baer--Suzuki theorem for the $\pi$-radical,
\textit{Israel J. Math.}, {\bf 245}, N\,1 (2021), 173--207. DOI: \href{https://doi.org/10.1007/s11856-021-2209-y}{\texttt{10.1007/s11856-021-2209-y}}

\bibitem{YWR} N. Yang, Zh. Wu, D.\,O.  Revin,  On the sharp Baer–Suzuki theorem for the 
$\pi$-radical: Sporadic groups, {\em Sib. Math. J.}, {\bf 63}, N\,2 (2022), 387--394.
DOI: \href{https://doi.org/10.1134/S0037446622020161}{\texttt{10.1134/S0037446622020161}}

\bibitem{YWRV} N. Yang, Zh. Wu, D.~O. Revin, E.~P. Vdovin, On the sharp Baer--Suzuki theorem for the $\pi$-radical of a~finite group,
\textit{Sb. Math.}, {\bf 214}, N\,1 (2023), 108--147. DOI:
\href{https://doi.org/10.4213/sm9698e}{\texttt{10.4213/sm9698e}}

\end{thebibliography}
\end{document}